\theoremstyle{plain}
\newtheorem{thm}{Theorem}[section]
\newtheorem{lem}{Lemma}[section]
\newtheorem{conj}{Conjecture}[section]
\theoremstyle{definition}
\newtheorem{df}{Definition}[section]
\newtheorem{rem}{Remark}[section]
\newcommand{\FF}{\mathbb{F}}
\newcommand{\ZZ}{\mathbb{Z}}
\newcommand{\CC}{\mathbb{C}}
\newcommand{\QQ}{\mathbb{Q}}
\newcommand{\NN}{\mathbb{N}}
\newcommand{\HH}{\mathbb{H}}
\DeclareMathOperator{\Mat}{Mat}
\DeclareMathOperator{\Ima}{Im}
\DeclareMathOperator{\diag}{diag}
\begin{document}

\title{{
%A note on E-polynomials
On Eisenstein polynomials and zeta polynomials
%from the point of view of Oura's conjecture 
}
\footnote{This work was supported by JSPS KAKENHI (18K03217).}
%\footnote{
%This work was supported by JSPS KAKENHI Grant Number 22840003, 24740031.
%}
%\\
%{\bf Version 1.4} 
}

\author{
Tsuyoshi Miezaki
\thanks{Faculty of Education, University of the Ryukyus, Okinawa  
903-0213, Japan 
miezaki@edu.u-ryukyu.ac.jp
%telephone: +81-98-895-8883, Fax: +81-98-895-8883 
(Corresponding author)
}
}

\date{}
\maketitle

\begin{abstract}
Eisenstein polynomials, which were defined by 
Oura, are analogues of the 
concept of an Eisenstein series. 
Oura conjectured that there exist some analogous properties 
between Eisenstein series and Eisenstein polynomials.
In this paper, 
%we show a Oura's conjecture and 
we provide new analogous properties of Eisenstein polynomials and 
zeta polynomials. 
These properties are finite analogies of 
certain properties of Eisenstein series. 
\end{abstract}

{\small
\noindent
{\bfseries Key Words:}
Eisenstein polynomials, Zeta polynomials, Weight enumerators.\\ \vspace{-0.15in}

\noindent
2010 {\it Mathematics Subject Classification}. Primary 94B05;
Secondary 11T71, 11F11.\\ \quad
}

%\noindent
%2010 {\it Mathematics Subject Classification}.
%Primary 11F30;
%Secondary 20D08, 11F27.\\ \quad

\section{Introduction}
In the present paper, 
we discuss some analogies between 
Eisenstein series, Eisenstein polynomials, and zeta polynomials. 
%Let us explain the infinite and finite objects, 
%which we target in this paper.  
%Note that Oura's work \cite{Oura1,Oura2} are 
%origin from this viewpoint. 
%First we introduce the infinite objects, 
%and then the finite objects. 
First we define Eisenstein series and Eisenstein polynomials. 
For $g\in \NN$, let 
\[
\Gamma_g:=\{M\in \Mat(2g,\ZZ)\mid {}^t M J_gM=J_g\}, 
\]
where 
$J_g=
\begin{pmatrix}
{\bf 0} & {\bf 1_g}\\
-{\bf 1_g} & {\bf 0}
\end{pmatrix}, 
$
and ${\bf 1_g}$ is the identity matrix of degree $g$. 
Let $\HH_g$ be the Siegel upper half plane, namely, 
\[
\HH_g:=\{
M\in \Mat(g,\ZZ)\mid {}^t M=M, \Ima M>0
\}. 
\]
Let $f$ be a holomorphic function on $\HH_g$. 
Then $f$ is called a Siegel modular form
for $\Gamma_g$ of weight $k$ if $f$ satisfies
\[
f(MZ)= \det( CZ + D) ^k f(Z) \mbox{ for } 
M= \begin{pmatrix}
A&B\\
C&D
\end{pmatrix}\in \Gamma_g
\]
and if $f$ is also holomorphic at cusps. 
%(the second condition is automatic if $g > 1$). 
%We denote the ring of the Siegel modular forms with $M(\Gamma_g)$. 
We write $M(\Gamma_g)$ for the ring of the Siegel modular forms. 
The Siegel modular forms are 
considered to be $\Gamma_g$-invariant 
functions 
(see \cite{F1,F2,K} for details about Siegel modular forms).

Next, we introduce 
%the 
%two 
some typical examples of 
Siegel modular forms. 
%The first is the 
%theta series of the Type II lattice $L\subset \RR^m$: 
%\[
%\vartheta_{L}^g=\sum_{\lambda_1,\ldots,\lambda_g\in L}
%\exp(\pi i \tr(M((\lambda_j,\lambda_k))_{1\leq j,k\leq g})).
%\]
%The second is 
For $g\in \NN$, let 
\[
\Delta_{g,0}:=
\left\{
\begin{pmatrix}
\ast&\ast\\
{\bf 0_n}&\ast
\end{pmatrix}
\in \Gamma_g
\right\}, 
\] 
where 
${\bf 0_g}$ is the zero matrix of degree $g$. 
%be the subgroup of $\Gamma_g$ consisting 
%of the elements of $\Gamma_g$ with left bottom blocks equal to zero. 
The Siegel Eisenstein series is defined as follows: 
\[
\psi_k^{\Gamma_g}(Z)= \sum_
{
\begin{pmatrix}
A&B\\
C&D
\end{pmatrix}
:\Delta_g\backslash \Gamma_g
} \det( CZ + D) ^{-k}, 
% f(Z) \mbox{ for } 
%E_k(Z)= \sum_{C,D} |CZ + D|^{-k},
\]
for even $k>g+1$, where the summation is 
over a full set of representatives for the coset $\Delta_g\backslash \Gamma_g$. 
%extended over elements 
%if $\Gamma_g$ composed of $A,B,C,D$ modulo left multiplications of 
%elements of $\Gamma_{g,0}. $
%${c, D}$ runs over all representatives 
%of the equivalence classes of coprime symmetric pairs of 
%degree $n$.

In the following, 
we define an Eisenstein polynomial. 
Let 
\[
H_g:=\left\langle 
\left(\frac{1+\sqrt{-1}}{2}\right)^g 
\left((-1)^{({\bf a},{\bf b})}\right)_{{\bf a},{\bf b}\in \FF_2^g}, 
\diag\left(\sqrt{-1})^{{}^t{\bf a}S{\bf a}};{\bf a}\in \FF_2^g\right)
\right\rangle. 
\]
Then,
$H_g$ acts on the space $\CC[x_{\bf a}:{\bf a}\in \FF_2^g]$ 
in the natural way and 
we define the $H_g$-invariant subspace of $\CC[x_{\bf a}:{\bf a}\in \FF_2^g]$ 
as follows: 
\begin{align*}
&\CC[x_{\bf a}:{\bf a}\in \FF_2^g]^{H_g}\\
&:=
\{
f(x_{\bf a}:{\bf a}\in \FF_2^g)\in \CC[x_{\bf a}:{\bf a}\in \FF_2^g]\\
&\hspace{30pt}\mid 
f(M {}^t(x_{\bf a}:{\bf a}\in \FF_2^g))=f(x_{\bf a}:{\bf a}\in \FF_2^g), M\in H_g
\}. 
\end{align*}

Here is a typical example of 
%We now introduce 
%a
%%the 
%%two 
%typical example of 
%the elements of 
$\CC[x_{\bf a}:{\bf a}\in \FF_2^g]^{H_g}$. 
%The first is the 
%weight enumerator of the Type II code $C< \FF_2^m$: 
%\[
%w_{C}^g(x_a;a\in \FF_2^g-1)=\sum_{v_1,\ldots,v_g\in C}
%\prod_{i=1,m}x_{v_{1,i},\ldots,v_{g,i}}. 
%\]
%The 
%second is 
Oura defined an
Eisenstein polynomial 
as follows: 
\[
\varphi_\ell^{H_g}(x_{\bf a}:{\bf a}\in \FF_2^g)
=\frac{1}{|H_g|}\sum_{\sigma\in H_g}(\sigma x_{\bf 0})^\ell 
\]
\cite{Oura1,Oura2}.
It is straightforward to show that the 
Eisenstein polynomial is in $\CC[x_{\bf a}:{\bf a}\in \FF_2^g]^{H_g}$. 
%\[
%E_{g,n}(x_{a}:a\in \FF_2^g-1)
%=\sum_{[C]}\frac{1}{|\Aut(C)|}w_{g,C}(x_{a}:a\in \FF_2^g-1). 
%\]

Here, 
we introduce an expression relating 
$\CC[x_{\bf a}:{\bf a}\in \FF_2^g]^{H_g}$ and 
$M(\Gamma_g)$. 
For $f \in \CC[x_{\bf a}:{\bf a}\in \FF_2^g]^{H_g}$, 
we construct the elements of $\Gamma_g$ as follows: 
\begin{align*}
Th: \CC[x_{\bf a}:{\bf a}\in \FF_2^g]^{H_g} &\rightarrow 
M(\Gamma_g)\\
x_{\bf a}&\mapsto f_{\bf a}(\tau)=\sum_{{\bf b}\in \ZZ^g, {\bf a}\equiv {\bf b}\pmod{2}}
\exp(\pi i {}^t{\bf b}\tau {\bf b}/2). 
\end{align*}
The map $Th$ is called the theta map. 
%We recall that the concept of the 
%Construction A. 
%Let $C$ be a Type II code of length $m$. 
%Then 
%we defined the lattice $L(C)$ such that 
%\[
%L(C):=\frac{1}{\sqrt{2}}\{x\in \ZZ^m \mid x\pmod 2\in C\}. 
%\]
%Then $L(C)$ is also a Type II lattice. 
%Moreover, 
%\[
%Th(w_{C}^g(x_a;a\in \FF_2^g-1))=\vartheta_L^g(\tau). 
%\]
%In this way, some relations exist 
%in row of Table \ref{Tab:sum}. 

The elements of both $M(\Gamma_g)$ and 
$\CC[x_{\bf a}:{\bf a}\in \FF_2^g]^{H_g}$ are ``invariant functions" 
%Therefore, it is expected that those two objects have similar properties. 
and the Eisenstein series and the Eisenstein polynomial are 
``average functions" of the groups. Therefore, 
these two objects are expected to have similar properties. 
Moreover, for $f \in \CC[x_{\bf a}:{\bf a}\in \FF_2^g]^{H_g}$, 
it is expected that $f$ and $Th(f)$ have similar properties. 
%In the present paper, we introduce such properties. 

Table \ref{Tab:sum} shows a summary of the concepts that we have introduced thus far. 
%As we explain later, 
%the objects of a row 
%relate to each other: 
\begin{table}[thb]
\caption{Summary of our objects}
\label{Tab:sum}
\begin{center}
%{\small
%{\footnotesize
%{\scriptsize
\begin{tabular}{c|c}
\noalign{\hrule height0.8pt}
%Infinite & Finite\\ \hline \hline
$\Gamma_g$ & $H_g$\\\hline
%$\vartheta_{L}^g$ & $w_C^g$\\
%Theta series & Weight enumerators\\
$M(\Gamma_g)$ & $\CC[x_{\bf a}:{\bf a}\in \FF_2^g]^{H_g}$\\\hline
Eisenstein series & Eisenstein polynomials\\\hline
$f$ & $Th(f)$\\
\noalign{\hrule height0.8pt}
\end{tabular}
%}
\end{center}
\end{table}
%%%%%%%%%%%%%%%%%%%%%%%%%%%%%%%%%%%%%%%%%%%%%%%%

In the following, 
we consider the case $g=1$. 
The explicit generators of $H_1$ are written as follows: 
\[
H_1=
\left\langle
\frac{1}{2}
\begin{pmatrix}
1+\sqrt{-1}&1+\sqrt{-1}\\
1+\sqrt{-1}&1-\sqrt{-1}
\end{pmatrix}, 
\begin{pmatrix}
1&0\\
0&\sqrt{-1}
\end{pmatrix}
\right\rangle. 
\]
%and $f_i:=x^i$. 
Then the Eisenstein polynomial $\varphi_{\ell}^{H_1}(x_0,x_1)$ is written as follows: 
\[
\varphi_{\ell}^{H_1}(x_0,x_1)=\frac{1}{|H_1|}\sum_{\sigma\in H_1}(\sigma x_0)^{\ell}. 
\]
It is known that the ring 
$\CC[x_0,x_{1}]^{H_1}$ is generated by two elements \cite{ST}: 
\[
\CC[x_0,x_{1}]^{H_1}=\langle \varphi_{8}^{H_1}(x_0,x_1),\varphi_{12}^{H_1}(x_0,x_1)\rangle. 
\]
Therefore, for $\ell\not\equiv 0\pmod{4}$ and $\ell=4$, 
$\varphi_{\ell}^{H_1}(x_0,x_1)\equiv 0$. 
For $\varphi_{\ell}^{H_1}(x_0,x_1)\not\equiv 0$, 
we denote by $\widetilde{\varphi_{\ell}^{H_1}}(x_0,x_1)$ 
the polynomial $\varphi_{\ell}^{H_1}(x_0,x_1)$ divided by 
its $x_0^\ell$ coefficient.
%\begin{align*}
%\widetilde{\varphi_{\ell}^{H_1}}(x_0,x_1)&:=x_0^{\ell}+\cdots\\
%&=(\mbox{constant})\times \varphi_{\ell}^{H_1}(x_0,x_1). 
%\end{align*}
We give some examples. 
\begin{table}[thb]
%\caption{Summary of our objects}
\label{Tab:sum}
\begin{center}
%{\small
%{\footnotesize
%{\scriptsize
\begin{tabular}{c|c}
\noalign{\hrule height0.8pt}
%Infinite & Finite\\ \hline \hline
$\ell$ & $\widetilde{\varphi_{\ell}^{H_1}}(x_0,x_1)$\\\hline
%$\vartheta_{L}^g$ & $w_C^g$\\
%Theta series & Weight enumerators\\
$8$ & $x_0^8+14 x_0^4 x_1^4+x_2^8$\\\hline
$12$ & $x_0^{12}-33 x_0^8 x_1^4-33 x_0^4 x_1^8+x_1^{12}$\\
\noalign{\hrule height0.8pt}
\end{tabular}
%}
\end{center}
\end{table}

%{\small
%$
%8\text{:  }x^8+14 x^4 y^4+y^8
%$

%$
%12\text{:  }x^{12}-33 x^8 y^4-33 x^4 y^8+y^{12}
%$

%$
%16\text{:  }x^{16}+28 x^{12} y^4+198 x^8 y^8+28 x^4 y^{12}+y^{16}
%$

%$
%24\text{:  }x^{24}+\frac{10626 x^{20} y^4}{1025}+\frac{735471 x^{16} y^8}{1025}%+\frac{2704156 x^{12} y^{12}}{1025}+\frac{735471 x^8 y^{16}}{1025}+\frac{10626 %x^4 y^{20}}{1025}+y^{24}
%$

%}

In \cite{Ouratalk,Oura3}, several analogies between 
Eisenstein series and 
Eisenstein polynomials were reported. 
Suppose $p$ is a prime number and $v_p$ is the corresponding 
value for the field $\QQ$. 
Then $a\in \QQ$ is called $p$-integral if $v_p(a)\geq 0$. 
Eisenstein series have the following properties: 
\begin{enumerate}
\item [(1)]
All of the zeros of the Eisenstein series are 
on the circle 
$\{e^{\sqrt{-1}\theta}\mid \pi/2\leq \theta \leq 2\pi/3\}$ 
\cite{RS}. 
%[Rankin--Swinnerton-Dyer (1984)]. 
\item [(2)]
The zeros of the Eisenstein series $\psi_k^{\Gamma_1}(z)$ are the same as those for $\psi_{k+2}^{\Gamma_1}(z)$ 
\cite{Nozaki}. 
\item [(3)]
For odd prime $p$, where $p\geq 5$, 
the coefficients of the Eisenstein series $\psi_{p-1}^{\Gamma_1}(z)$ 
are $p$-integral 
\cite[P.~233, Theorem 3]{IR}, \cite{Kob}. 
\end{enumerate}
Oura's conjecture states that 
the analogous properties of $(1),(2)$, and $(3)$ also hold for 
$Th(\widetilde{\varphi_\ell})$. 
Namely, 
\begin{conj}[\cite{Ouratalk,Oura3}]\label{conj;Oura}
\begin{enumerate}
\item [{\rm (1)}]
All of the zeros of $Th(\widetilde{\varphi_\ell^{H_1}})$ are 
on the circle 
$\{e^{\sqrt{-1}\theta}\mid \pi/2\leq \theta \leq 2\pi/3\}$. 
%[Rankin--Swinnerton-Dyer (1984)]. 
\item [{\rm (2)}]
The zeros of $Th(\widetilde{\varphi_\ell^{H_1}})$ are the same as 
those of $Th(\widetilde{\varphi_{\ell+4}^{H_1}})$. 
\item [{\rm (3)}]
Let $p$ be an odd prime. 
The coefficients of $Th(\widetilde{\varphi_{2(p-1)}^{H_1}})$ 
are $p$-integral. 
\end{enumerate}
\end{conj}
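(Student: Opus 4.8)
The plan is to move everything to the modular side through the theta map and to reduce each of (1)--(3) to an explicit statement about holomorphic modular forms on $\Gamma_1$ (that is, $SL(2,\ZZ)$). The first step is to write $Th(\widetilde{\varphi_\ell^{H_1}})$ down explicitly. Since $\CC[x_0,x_1]^{H_1}=\langle\varphi_8^{H_1},\varphi_{12}^{H_1}\rangle$ is graded by degree, the homogeneous invariant $\varphi_\ell^{H_1}$ is a rational combination $\sum_{8a+12b=\ell}\lambda_{a,b}(\varphi_8^{H_1})^a(\varphi_{12}^{H_1})^b$, and the coefficients $\lambda_{a,b}$ can be extracted from the defining average $\varphi_\ell^{H_1}=\frac1{|H_1|}\sum_{\sigma\in H_1}(\sigma x_0)^\ell$ --- for instance from the generating identity $\sum_\ell\varphi_\ell^{H_1}t^\ell=\frac1{|H_1|}\sum_\sigma(1-t\,\sigma x_0)^{-1}$, which also produces a linear recursion in $\ell$. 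Comparing $q$-expansions (using the Jacobi relations between $\theta_2(2\tau),\theta_3(2\tau)$ and the standard theta constants) identifies $Th(\varphi_8^{H_1})$ and $Th(\varphi_{12}^{H_1})$ as nonzero scalar multiples of $\psi_4^{\Gamma_1}$ and $\psi_6^{\Gamma_1}$; since these generate $M(\Gamma_1)$ one obtains that $Th(\widetilde{\varphi_\ell^{H_1}})$ equals $\bigl(\sum_{a,b}\lambda_{a,b}\bigr)^{-1}\sum_{8a+12b=\ell}\lambda_{a,b}(\psi_4^{\Gamma_1})^a(\psi_6^{\Gamma_1})^b$, a weight-$\ell/2$ form with constant term $1$. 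Each of (1)--(3) then becomes an assertion about this explicit form, to be attacked by adapting the proof of the corresponding property of the Eisenstein series.

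For (1) I would run the Rankin--Swinnerton-Dyer argument behind property~(1) on this explicit form. On the arc $z=e^{\sqrt{-1}\theta}$, $\pi/2\le\theta\le2\pi/3$, write $Th(\widetilde{\varphi_\ell^{H_1}})(e^{\sqrt{-1}\theta})=e^{\sqrt{-1}\ell\theta/4}\,\Phi_\ell(\theta)$ with $\Phi_\ell$ real-valued, and count its sign changes; the aim is to exhibit on the closed arc as many zeros, with multiplicity, as the valence formula allows a weight-$\ell/2$ form, which forces all zeros onto the arc. As in \cite{RS} this reduces to isolating a single dominant term of $\Phi_\ell$ and bounding the remainder, the one new feature being that $Th(\widetilde{\varphi_\ell^{H_1}})$ is not itself an Eisenstein series, so the dominant term must be read off from the combination $\sum\lambda_{a,b}(\psi_4^{\Gamma_1})^a(\psi_6^{\Gamma_1})^b$ found above; finitely many small $\ell$ would probably be dispatched by direct computation.

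For (2) I would try to upgrade the recursion from the first step into a separation/interlacing argument in the spirit of \cite{Nozaki}. The recursion should take the form $Th(\widetilde{\varphi_{\ell+4}^{H_1}})=A_\ell\,Th(\widetilde{\varphi_\ell^{H_1}})+B_\ell\,Th(\widetilde{\varphi_{\ell-4}^{H_1}})$ with $A_\ell,B_\ell$ explicit low-weight factors built from $\psi_4^{\Gamma_1},\psi_6^{\Gamma_1}$ and their ratios; evaluating on the arc and feeding in the sign information from (1) should show that the zeros of $Th(\widetilde{\varphi_\ell^{H_1}})$ and of $Th(\widetilde{\varphi_{\ell+4}^{H_1}})$ interlace along the arc and that the ``extra'' zeros of the higher weight are pinned at the endpoints $z=\sqrt{-1}$ and $z=e^{2\pi\sqrt{-1}/3}$ of the arc --- the precise sense in which the two zero sets agree. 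I expect this to be the main obstacle: Nozaki's proof is finely tuned to the Eisenstein series $\psi_k^{\Gamma_1}$, and carrying it over requires genuine control of the zeros and signs of $A_\ell,B_\ell$ on the arc relative to those of $\psi_4^{\Gamma_1}(e^{\sqrt{-1}\theta})$ and $\psi_6^{\Gamma_1}(e^{\sqrt{-1}\theta})$.

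For (3) I would first reduce $p$-integrality of the $q$-expansion of $Th(\widetilde{\varphi_{2(p-1)}^{H_1}})$ to $p$-integrality of the \emph{polynomial} $\widetilde{\varphi_{2(p-1)}^{H_1}}\in\QQ[x_0,x_1]$: an $H_1$-invariant involves $x_1$ only through $x_1^4$, whose theta image lies in $\ZZ[[q]]$, and $\psi_4^{\Gamma_1},\psi_6^{\Gamma_1}$ have integral $q$-expansions, so a $p$-integral polynomial identity transports to the $q$-side. Writing $\widetilde{\varphi_{2(p-1)}^{H_1}}=(\sum\lambda_{a,b})^{-1}\varphi_{2(p-1)}^{H_1}$, the numerator $\frac1{|H_1|}\sum_\sigma(\sigma x_0)^{2(p-1)}$ has coefficients in $\ZZ[\sqrt{-1},\tfrac12]$ and is $p$-integral once $p\nmid|H_1|$ (read off from $|H_1|$), so the crux is that the normalizing constant $\sum\lambda_{a,b}=\frac1{|H_1|}\sum_\sigma\alpha_\sigma^{2(p-1)}$, where $\sigma x_0=\alpha_\sigma x_0+\beta_\sigma x_1$, is a $p$-adic unit. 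For this I would run a Fermat--Euler argument: $\alpha_\sigma^{2(p-1)}=(\alpha_\sigma^2)^{p-1}$, and modulo a prime of $\ZZ[\sqrt{-1},\tfrac12]$ above $p$ each term is congruent to a root of unity of order dividing $p-1$ when $p\equiv1\bmod4$, or $p+1$ when $p\equiv3\bmod4$; it then remains to show the resulting character sum is not divisible by $p$. This last point --- most delicate for $p\equiv3\bmod4$, where one may need the explicit $H_1$-orbit of $x_0$ rather than a purely formal estimate --- is the arithmetic core of (3), exactly parallel to the von Staudt--Clausen input \cite{IR,Kob} behind property~(3), with the Bernoulli denominator there replaced by this group-theoretic normalizing constant.
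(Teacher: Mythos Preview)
The paper does \emph{not} prove parts (1) and (2) of this conjecture; only part (3) is established (as item~(III) of the main theorem), and (1)--(2) are left open. Your sketches for (1) and (2) are plausible lines of attack, but you should be aware that the paper makes no claim to settle them, and the obstacles you yourself flag---carrying the Rankin--Swinnerton-Dyer and Nozaki arguments over to a form that is not itself an Eisenstein series, and controlling the signs of the recursion coefficients on the arc---are genuine and unresolved here.

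For part (3) your outline is correct in spirit---reduce to $p$-integrality of the polynomial $\widetilde{\varphi_{2(p-1)}^{H_1}}$, then show that the normalizing constant (the $x_0^\ell$-coefficient of $\varphi_\ell^{H_1}$) is a $p$-adic unit---but you are working harder than necessary, and you leave the decisive step open. The paper bypasses your group-average computation entirely by quoting Thompson's closed formula
\[
\varphi_\ell^{H_1}(x_0,x_1)=\bigl((-1)^{\ell/4}+2^{(\ell-4)/2}\bigr)(x_0^\ell+x_1^\ell)+\sum_{\substack{0<j<\ell\\ 4\mid j}}(-1)^{\ell/4}\binom{\ell}{j}\,x_0^{\ell-j}x_1^j,
\]
so the normalizing constant is simply $(-1)^{\ell/4}+2^{(\ell-4)/2}$. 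For $\ell=2(p-1)$ one has $2^{(\ell-4)/2}=2^{p-3}\equiv 4^{-1}\pmod p$ by Fermat, whence this constant is $\equiv -3/4$ or $5/4\pmod p$ according as $p\equiv 3$ or $1\pmod 4$; this is nonzero for $p\neq 5$ (and $p=3$ is vacuous since $\varphi_4^{H_1}\equiv 0$), while $p=5$ is handled by inspecting $\widetilde{\varphi_8^{H_1}}=x_0^8+14x_0^4x_1^4+x_1^8$ directly. What you call the ``arithmetic core''---the nonvanishing of $\frac{1}{|H_1|}\sum_\sigma\alpha_\sigma^{2(p-1)}$ modulo $p$---is exactly this computation, but without the closed formula you have no clean way to evaluate the sum and you explicitly leave it unproved; Thompson's formula is the missing ingredient that closes the gap. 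The final transfer to the theta side is then as you say: $f_0,f_1\in\ZZ[[q]]$.
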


To explain the above results, 
we introduce the zeta polynomials, 
which were defined by Duursma \cite{D1}.
%The zeta functions and zeta polynomials for linear codes 
%were introduced by Duursma \cite{D1}.
%Let $\CC[x_0, x_1]$ be a polynomial ring with 
%two variables $x_0, x_1$ over the complex number field $\CC$. 
Analogous to coding theory, we say $f \in \CC[x_0, x_1]$ 
is the formal weight enumerator of degree $n$ 
if $f$ is a homogeneous polynomial of degree $n$ 
and the coefficient of $x_0^n$ is one. 
Also, for
\[
f (x_0, x_1) = x_0^n +
\sum_{i=d}^n
A_ix_0^{n-i}x_1^i\ (A_d \neq 0),
\]
$d$ is the minimum distance of $f$. 
Let $R$ be a commutative ring and $R[[T]]$ be the formal power series 
ring over $R$. 
For
$Z(T) =
\sum_{i=0}^{\infty}a_nT_n \in R[[T]]$, 
$[T^k]Z(T)$ denotes the coefficient $a_k$. 
The following lemma follows: 
\begin{lem}[cf.~\cite{D1}]\label{lem:D}
Let $f$ be a formal weight enumerator of degree $n$, 
$d$ be the minimum distance, and $q$ be any real number not one.
Then there exists a unique polynomial $P_f(T) \in \CC[T]$ 
of degree at most $n-d$ such that the following equation holds: 
\[
[T^{n-d}]
\frac{P_f(T)}
{(1 - T )(1 -qT ) }
(x_0T + x_1(1 - T ))^n =
\frac{f (x_0, x_1) - x_0^n}
{q - 1}.
\]
\end{lem}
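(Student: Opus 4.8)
The plan is to read off $P_f(T)$ by expanding the left-hand side as a power series in $T$ and matching the coefficient of $T^{n-d}$. First I would observe that the statement is really a statement about the $\CC$-linear map
\[
\Phi\colon \CC[T]_{\le n-d}\longrightarrow \{\text{homogeneous degree-}n\text{ polynomials in }x_0,x_1\},
\qquad
P\longmapsto [T^{n-d}]\frac{P(T)}{(1-T)(1-qT)}\,(x_0T+x_1(1-T))^n,
\]
and that the claim is that this map is injective with the prescribed image. Since the source has dimension $n-d+1$, it suffices to (i) exhibit, for each $i$ with $d\le i\le n$, a polynomial whose image has $x_0$-degree exactly $n-i$ (so that the images are linearly independent and span a space of dimension $n-d+1$), and (ii) check that every element of the image is a homogeneous degree-$n$ polynomial with vanishing $x_0^n$-coefficient, matching the right-hand side's shape. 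The uniqueness of $P_f$ is then immediate from injectivity.

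The key computation is the following. Expand
\[
\frac{1}{(1-T)(1-qT)}=\sum_{j\ge 0}\frac{q^{j+1}-1}{q-1}\,T^{j},
\]
which uses $q\ne 1$, and expand $(x_0T+x_1(1-T))^n=\sum_{m=0}^n\binom{n}{m}x_0^m\,T^m\,\bigl(x_1(1-T)\bigr)^{n-m}$, noting that $x_1(1-T)$ contributes powers $x_1^{n-m}$ times a polynomial in $T$ of degree $n-m$. Multiplying these three factors and extracting $[T^{n-d}]$ gives an explicit expression for $\Phi(P)$ for any $P$; in particular, writing $P(T)=\sum_{k} c_k T^k$, the coefficient of $x_0^{n-i} x_1^{i}$ in $\Phi(P)$ is a linear combination of the $c_k$ whose ``leading'' term (the one coming from the top power of $x_1(1-T)$ paired appropriately) is, up to a nonzero binomial/geometric factor, $c_{i-d}$ plus lower-indexed $c_k$. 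This triangular structure is exactly what shows $\Phi$ is injective and that a polynomial $P_f$ of degree $\le n-d$ can be solved for recursively to match any prescribed $f(x_0,x_1)-x_0^n$ (which, being divisible by $x_1^d$ and of degree $n$, lies in the image).

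I would organize the write-up as: first fix notation and expand the generating function as above; second, show $\operatorname{Im}\Phi$ consists of homogeneous degree-$n$ polynomials divisible by $x_1^{d}$ with zero $x_0^n$-coefficient (the $x_1^d$-divisibility comes from the fact that in $(x_0T+x_1(1-T))^n$ the coefficient of $T^{n-d}$ can involve at most $n-d$ factors of $(x_0T)$ or of the $T$ from $(1-T)$, forcing at least $d$ factors of $x_1$); third, establish the triangular/invertibility claim to get existence and uniqueness of $P_f$; finally divide by $q-1$. The main obstacle I expect is purely bookkeeping: carefully tracking which combinations of the three series produce $T^{n-d}$ and verifying the triangularity constant is nonzero (it is a product of a binomial coefficient and the factor $\frac{q^{j+1}-1}{q-1}$, the latter nonzero precisely because $q$ is not $1$ — this is where the hypothesis $q\ne 1$ is essential). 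No single step is deep; the care is in the indexing of the triple convolution.
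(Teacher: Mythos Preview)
The paper does not prove this lemma; it is stated with a citation to Duursma and used as a black box. So there is no ``paper's proof'' to compare against, and your plan stands on its own.

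Your strategy is correct: the map $\Phi$ is $\CC$-linear from an $(n-d+1)$-dimensional source into the $(n-d+1)$-dimensional space of homogeneous degree-$n$ polynomials divisible by $x_1^{d}$, and triangularity gives injectivity, hence bijectivity, hence existence and uniqueness of $P_f$. Two small corrections to tighten the write-up:

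\begin{itemize}
\item Your argument for $x_1^{d}$-divisibility is phrased as a count of factors of $T$ inside $(x_0T+x_1(1-T))^n$ alone, but the extra factor $P(T)/\bigl((1-T)(1-qT)\bigr)$ also contributes powers of $T$. The clean way is to note that the coefficient of $x_0^{n-i}x_1^{i}$ in $(x_0T+x_1(1-T))^n$ is $\binom{n}{i}T^{n-i}(1-T)^{i}$, so after multiplying by the power series in $T$ the lowest $T$-power with that monomial is still $T^{n-i}$; hence $[T^{n-d}]$ vanishes unless $i\ge d$.
\item The triangular diagonal entry is not a factor of the form $\dfrac{q^{\,j+1}-1}{q-1}$. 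Carrying out the extraction,
\[
[T^{n-d}]\,\frac{P(T)}{(1-T)(1-qT)}\binom{n}{i}T^{n-i}(1-T)^{i}
=\binom{n}{i}\,[T^{\,i-d}]\,\frac{P(T)(1-T)^{i-1}}{1-qT},
\]
and since $\dfrac{(1-T)^{i-1}}{1-qT}=1+O(T)$, the coefficient of $c_{i-d}$ is exactly $\binom{n}{i}\neq 0$, independently of $q$. Thus the hypothesis $q\ne 1$ is not needed for invertibility of $\Phi$; it is needed only so that the right-hand side $\dfrac{f(x_0,x_1)-x_0^{n}}{q-1}$ is defined (and, if you use it, so that the partial-fractions expansion of $\dfrac{1}{(1-T)(1-qT)}$ makes sense).
\end{itemize}

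With these two adjustments the argument goes through cleanly.
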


\begin{df}[cf.~\cite{D2}]
For a formal weight enumerator $f$, we call the polynomial 
$P_f(T)$ determined in Lemma \ref{lem:D} the zeta polynomial of $f$ with respect to $q$. 
If all the zeros of $P_f(T)$ have absolute value $1/\sqrt{q}$, 
then $f$ satisfies the Riemann hypothesis analogues (RHA). 
\end{df}

We investigate the zeta polynomials of the Eisenstein polynomials for $q=2$. 
In the following, we assume that $q=2$. 
Below are the cases of $\ell = 8$ and $\ell = 12$: 
\begin{table}[thb]
%\caption{Summary of our objects}
\label{Tab:sum}
\begin{center}
%{\small
%{\footnotesize
%{\scriptsize
\begin{tabular}{c|c}
\noalign{\hrule height0.8pt}
%Infinite & Finite\\ \hline \hline
$\ell$ & $P_{\widetilde{\varphi_{\ell}^{H_1}}}(T)$\\\hline
%$\vartheta_{L}^g$ & $w_C^g$\\
%Theta series & Weight enumerators\\
$8$ & $\frac{1}{5}+\frac{2 T}{5}+\frac{2 T^2}{5}$\\\hline
$12$ & $-\frac{1}{15}-\frac{2 T}{15}-\frac{2 T^2}{15}+\frac{4 T^4}{15}+\frac{8 T^5}{15}+\frac{8 T^6}{15}$\\
\noalign{\hrule height0.8pt}
\end{tabular}
%}
\end{center}
\end{table}

%{\small
%$
%8\text{:  }\frac{1}{5}+\frac{2 T}{5}+\frac{2 T^2}{5}
%$

%$
%12\text{:  }-\frac{1}{15}-\frac{2 T}{15}-\frac{2 T^2}{15}+\frac{4 T^4}{15}+\fra%c{8 T^5}{15}+\frac{8 T^6}{15}
%$

%$
%16\text{:  }\frac{1}{65}+\frac{2 T}{65}+\frac{2 T^2}{65}-\frac{4 T^4}{65}-\frac%{8 T^5}{65}-\frac{8 T^6}{65}+\frac{16 T^8}{65}+\frac{32 T^9}{65}+\frac{32 T^{10%}}{65}
%$

%$
%20\text{:  }-\frac{1}{255}-\frac{2 T}{255}-\frac{2 T^2}{255}+\frac{4 T^4}{255}+%\frac{8 T^5}{255}+\frac{8 T^6}{255}-\frac{16 T^8}{255}-\frac{32 T^9}{255}-\frac%{32 T^{10}}{255}+\frac{64 T^{12}}{255}+\frac{128 T^{13}}{255}+\frac{128 T^{14}}%{255}
%$

%$
%24\text{:  }\frac{1}{1025}+\frac{2 T}{1025}+\frac{2 T^2}{1025}-\frac{4 T^4}{102%5}-\frac{8 T^5}{1025}-\frac{8 T^6}{1025}+\frac{16 T^8}{1025}+\frac{32 T^9}{1025%}+\frac{32 T^{10}}{1025}-\frac{64 T^{12}}{1025}-\frac{128 T^{13}}{1025}-\frac{1%28 T^{14}}{1025}+\frac{256 T^{16}}{1025}+\frac{512 T^{17}}{1025}+\frac{512 T^{1%8}}{1025}
%$}

The main purpose of the present paper 
is to show that 
Oura's observation for the zeta polynomial associated 
with Eisenstein polynomials holds: 
\begin{thm}\label{thm:main}
\begin{itemize}
\item [{\rm (I)}]
\begin{enumerate}
\item [{\rm (1)}]
$P_{\widetilde{\varphi_{\ell}^{H_1}}}(T)$ satisfies RHA. 

\item [{\rm (2)}]
The zeros of $P_{\widetilde{\varphi_{\ell}^{H_1}}}(T)$ interlace  those of $P_{\widetilde{\varphi_{\ell+4}^{H_1}}}(T)$. 

\item [{\rm (3)}]
Let $p$ be an odd prime with $p\neq 5$. Then 
the coefficients of $P_{\widetilde{\varphi_{2(p-1)}^{H_1}}}(T)$ are $p$-integral. 

\end{enumerate}
\item [{\rm (II)}]
Let $p$ be an odd prime. Then 
the coefficients of $\widetilde{\varphi_{2(p-1)}^{H_1}}(x_0,x_1)$ are $p$-integral. 

\item [{\rm (III)}]
Conjecture \ref{conj;Oura} (3) is true. 
\end{itemize}
\end{thm}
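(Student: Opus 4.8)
The plan is to treat the three zeta‑polynomial assertions in (I) as the heart of the matter and to derive (II) and (III) from them by the essentially formal manipulations afforded by Lemma~\ref{lem:D} and the theta map. First I would fix the structural input. Since $\CC[x_0,x_1]^{H_1}=\CC[\widetilde{\varphi_8^{H_1}},\widetilde{\varphi_{12}^{H_1}}]$ and $Th$ is a degree‑doubling graded isomorphism of $\CC[x_0,x_1]^{H_1}$ onto $M(\Gamma_1)$ carrying the Eisenstein polynomial to the Eisenstein series (Oura, cf.~\cite{Oura1,Oura2}), one may identify $\widetilde{\varphi_\ell^{H_1}}$ with $Th^{-1}$ of the normalized weight‑$\ell/2$ Eisenstein series $\psi_{\ell/2}^{\Gamma_1}$; this determines all coefficients $A_i$ of $\widetilde{\varphi_\ell^{H_1}}$ and hence, through Lemma~\ref{lem:D}, all coefficients of $P_{\widetilde{\varphi_\ell^{H_1}}}(T)$. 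Two facts extracted here are used throughout: (a) the minimum distance of $\widetilde{\varphi_\ell^{H_1}}$ is $4$ for every admissible $\ell$ (equivalently $A_4\ne0$, which follows from the non‑vanishing of the first Fourier coefficient of every Eisenstein series of weight $\ge 4$); and (b) $\widetilde{\varphi_\ell^{H_1}}$ is formally self‑dual — fixed by the MacWilliams transform, which corresponds on the modular side to $\tau\mapsto -1/\tau$ — so that $P_{\widetilde{\varphi_\ell^{H_1}}}$ has degree exactly $\ell-6$ and satisfies a functional equation $2^{g}T^{2g}P(1/(2T))=\varepsilon_\ell\,P(T)$ with $2g=\ell-6$ and $\varepsilon_\ell\in\{\pm1\}$.

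For (I)(1) and (I)(2) I would linearize. Put $u=\sqrt{2}\,T+1/(\sqrt{2}\,T)$, so that $|T|=1/\sqrt{2}$ corresponds to $u\in[-2,2]$; the functional equation then lets one write $P_{\widetilde{\varphi_\ell^{H_1}}}(T)$ as $(\sqrt{2}\,T)^{g}$ times a real polynomial $R_\ell(u)$ when $\varepsilon_\ell=+1$, and as $(\sqrt{2}\,T)^{g}\bigl(\sqrt{2}\,T-1/(\sqrt{2}\,T)\bigr)R_\ell(u)$ when $\varepsilon_\ell=-1$ (the extra factor contributing the two zeros $T=\pm1/\sqrt{2}$, which already lie on the circle). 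Thus RHA for $\widetilde{\varphi_\ell^{H_1}}$ is exactly the statement that every zero of $R_\ell$ is real and lies in $[-2,2]$, and (I)(2) becomes the interlacing of the zeros of $R_\ell$ with those of $R_{\ell+4}$. I would establish these together, by induction on $\ell$ in steps of $4$ starting from the tabulated base cases $\ell=8,12$, via a short linear recursion relating $R_{\ell+4},R_\ell,R_{\ell-4}$ obtained by pushing the linear dependences among the $\widetilde{\varphi_\ell^{H_1}}$ in the graded pieces of $\CC[x_0,x_1]^{H_1}$ (mirroring those among the $\psi_k^{\Gamma_1}$ inside the finite‑dimensional $M_k(\Gamma_1)$) through the bijection of Lemma~\ref{lem:D}, followed by the standard Hermite--Biehler / Sturm‑chain mechanism: once the recursion is normalized to the shape $R_{\ell+4}=(\text{a real multiplier of controlled sign on $[-2,2]$})R_\ell-(\text{a positive constant})R_{\ell-4}$, interlacing of $(R_{\ell-4},R_\ell)$ forces interlacing of $(R_\ell,R_{\ell+4})$ together with real‑rootedness in $[-2,2]$. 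This is the precise finite analogue of the Rankin--Swinnerton-Dyer argument \cite{RS} for the zeros of $\psi_k^{\Gamma_1}$, and the hard part will be producing the recursion in a form whose coefficients have the sign behaviour needed for the induction to propagate; I expect this to be the main obstacle of the paper.

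Part (I)(3) is a denominator computation. Solving the triangular system implicit in Lemma~\ref{lem:D} at $q=2$ expresses each coefficient of $P_{\widetilde{\varphi_{2(p-1)}^{H_1}}}(T)$ as a $\ZZ$‑linear combination of the $A_i$ divided only by products of binomials $\binom{2(p-1)}{j}$ with $j\le 2p-6$ and of integers $2^j-1$; by Kummer's theorem the only such binomial divisible by $p$ is $\binom{2(p-1)}{p-1}$, while the classical congruence $\psi_{p-1}^{\Gamma_1}\equiv 1\pmod p$ — equivalently the $p$‑integrality and mod‑$p$ structure of $\psi_{p-1}^{\Gamma_1}$, \cite[p.~233, Thm.~3]{IR}, \cite{Kob} — supplies a matching factor of $p$ in the relevant numerator, so this factor cancels. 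What does not cancel is a power of $5$, traceable to $\binom{2(p-1)}{4}\equiv 5\pmod p$, a unit precisely when $p\ne5$; hence the coefficients of $P_{\widetilde{\varphi_{2(p-1)}^{H_1}}}(T)$ are $p$‑integral for every odd prime $p\ne5$. Making this cancellation precise — locating the factor of $p$ in the numerator exactly where Kummer's theorem puts the factor of $p$ in the denominator — is the secondary difficulty.

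Finally, (II) and (III) follow. Reading Lemma~\ref{lem:D} backwards at $q=2$ gives $\widetilde{\varphi_{2(p-1)}^{H_1}}=x_0^{n}+[T^{n-4}]\bigl(P_{\widetilde{\varphi_{2(p-1)}^{H_1}}}(T)\,(x_0T+x_1(1-T))^{n}/((1-T)(1-2T))\bigr)$ with $n=2(p-1)$, and since $1/((1-T)(1-2T))=\sum_{k\ge0}(2^{k+1}-1)T^{k}\in\ZZ[[T]]$ and $(x_0T+x_1(1-T))^{n}\in\ZZ[x_0,x_1][T]$, part (I)(3) yields (II) for odd $p\ne5$; the remaining cases $p=3$ (where $\widetilde{\varphi_4^{H_1}}\equiv0$) and $p=5$ (where $\widetilde{\varphi_8^{H_1}}=x_0^8+14x_0^4x_1^4+x_1^8\in\ZZ[x_0,x_1]$) are immediate. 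For (III), which is Conjecture~\ref{conj;Oura}(3): $Th(x_0)=f_0(\tau)$ and $Th(x_1)=f_1(\tau)$ are theta constants with integral Fourier expansions, so $Th(\widetilde{\varphi_{2(p-1)}^{H_1}})=\widetilde{\varphi_{2(p-1)}^{H_1}}(f_0,f_1)$ inherits $p$‑integral Fourier coefficients from (II), the case $p=3$ being trivial.
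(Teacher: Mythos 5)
Your proposal rests on a structural premise that is false: you propose to ``identify $\widetilde{\varphi_\ell^{H_1}}$ with $Th^{-1}$ of the normalized weight-$\ell/2$ Eisenstein series'' and to read off the coefficients $A_i$ from there. For $\ell=8,12$ this happens to hold because $M_4(\Gamma_1)$ and $M_6(\Gamma_1)$ are one-dimensional, but it fails in general; indeed, if $Th(\widetilde{\varphi_\ell^{H_1}})$ were literally $\psi_{\ell/2}^{\Gamma_1}$, then parts (1) and (2) of Conjecture \ref{conj;Oura} would follow at once from \cite{RS} and \cite{Nozaki}, whereas the paper leaves them open. Everything downstream of this identification inherits the gap: your proof of (I)(3) via Kummer's theorem plus the congruence $\psi_{p-1}^{\Gamma_1}\equiv 1\pmod p$ locates the relevant factor of $p$ in the Fourier coefficients of an Eisenstein series that is not the object under study. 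The correct input, which you never invoke, is Thompson's closed formula (Theorem \ref{thm:T}) for $\varphi_\ell^{H_1}$; it shows that the only possible denominator in $\widetilde{\varphi_\ell^{H_1}}$ and in $P_{\widetilde{\varphi_\ell^{H_1}}}$ is the single constant $(-1)^{\ell/4}+2^{(\ell-4)/2}$, and an elementary Fermat computation (Lemma \ref{lem:mod}) shows this is a $p$-unit for $\ell=2(p-1)$, $p\neq 3,5$. (Your attribution of the exceptional prime $5$ to $\binom{2(p-1)}{4}\equiv 5\pmod p$ is a red herring; the $5$ comes from $1+2^{p-3}\equiv 5\cdot 2^{p-3}\pmod p$.)

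The second gap is that for (I)(1) and (I)(2) you do not actually give a proof: you defer the three-term recursion $R_{\ell+4}=(\cdots)R_\ell-(\cdots)R_{\ell-4}$ with the required sign control and explicitly flag it as ``the main obstacle.'' No such recursion is produced, and none is needed. The paper instead computes the zeta polynomial in closed form,
\[
P_{\widetilde{\varphi_{\ell}^{H_1}}}(T)=\frac{1}{(-1)^{\ell/4}+2^{(\ell-4)/2}}\,\frac{(-1)^{\ell/4}+2^{(\ell-4)/2}T^{\ell-4}}{1-2T+2T^2},
\]
by passing through the normalized weight enumerator (Definition \ref{df:nwe}, Theorem \ref{thm:D3}) applied to Thompson's formula. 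From this, the zeros of $P_{\widetilde{\varphi_\ell^{H_1}}}$ are exactly the solutions of $(\sqrt{2}\,T)^{\ell-4}=-(-1)^{\ell/4}$ other than the two roots of $1-2T+2T^2$; they all satisfy $|T|=1/\sqrt{2}$ (RHA) and are equally spaced on that circle, so the interlacing with the zeros for $\ell+4$ is immediate. Your derivations of (II) from (I)(3) by reading Lemma \ref{lem:D} backwards, and of (III) from (II) via the integrality of the Fourier coefficients of $f_0,f_1$, are sound and close to the paper's, but they cannot be completed until (I)(3) is established by a correct route.
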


%This paper is organized as follows. 
In Section $2$, 
the proof of Theorem \ref{thm:main} is provided along with concluding remarks. 
%some results of the Eisenstein polynomials. 
%In Section $3$, 
%we give the proof of Theorem \ref{thm:main}. 

\section{Proof of Theorem \ref{thm:main}}

In this section, 
we provide the proof of Theorem \ref{thm:main}. 

\subsection{Preliminaries}

Before proving Theorem \ref{thm:main}, 
we first review some properties of Eisenstein polynomials and 
zeta polynomials. 

The explicit form of the Eisenstein polynomials $\varphi_\ell^{H_1}(x_0,x_1)$ are given by 
\begin{thm}[cf.~\cite{T}]\label{thm:T}
\[
\varphi_\ell^{H_1}(x_0,x_1)=((-1)^{\ell/4}+2^{(\ell-4)/2})(x_0^\ell+x_1^\ell)
+\sum_{0<j<\ell,j\equiv 0\pmod{4}}(-1)^{\ell/4}\binom{\ell}{j}x_0^{\ell-j}x_1^j. 
\]
\end{thm}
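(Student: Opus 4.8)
The plan is to compute the group average directly, reducing it to a sum of $\ell$-th powers of a few explicit linear forms. Since $(\sigma x_0)^\ell$ depends only on the first row of $\sigma$, the quantity $\sigma x_0$ runs over the $H_1$-orbit of the form $x_0$, each value being attained equally often, so that $\varphi_\ell^{H_1}$ is a fixed (degree-independent) multiple of $\sum_v v^\ell$ taken over that orbit. The first and main step is to show that, after collecting scalar multiples, this sum collapses to
\[
\varphi_\ell^{H_1}(x_0,x_1)=\frac14\left[(\sqrt2\,x_0)^\ell+(\sqrt2\,x_1)^\ell+\sum_{r=0}^{3}\Bigl(e^{\sqrt{-1}\,\pi/4}(x_0+\sqrt{-1}^{\,r}x_1)\Bigr)^\ell\right],
\]
a sum of $\ell$-th powers over the two \emph{coordinate} forms $\sqrt2\,x_0,\sqrt2\,x_1$ and the four \emph{balanced} forms $e^{\sqrt{-1}\,\pi/4}(x_0+\sqrt{-1}^{\,r}x_1)$. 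The magnitude $\sqrt2$ of the coordinate forms is what will produce the power of $2$, and the phase $e^{\sqrt{-1}\,\pi/4}$ attached to the balanced forms is what will produce the sign $(-1)^{\ell/4}$.

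Granting this reduction, the asserted closed form follows from three elementary facts for $\ell\equiv0\pmod4$. The coordinate part gives $\tfrac14(\sqrt2)^\ell(x_0^\ell+x_1^\ell)=2^{(\ell-4)/2}(x_0^\ell+x_1^\ell)$. For the balanced part, expanding by the binomial theorem and applying the root-of-unity filter $\tfrac14\sum_{r=0}^{3}\sqrt{-1}^{\,rj}=1$ if $4\mid j$ and $0$ otherwise leaves only the exponents $j\equiv0\pmod4$, yielding $\tfrac14\cdot4\,(e^{\sqrt{-1}\,\pi/4})^{\ell}\sum_{j\equiv0\,(4)}\binom{\ell}{j}x_0^{\ell-j}x_1^j$; and the identity $(e^{\sqrt{-1}\,\pi/4})^{\ell}=(1+\sqrt{-1})^{\ell}2^{-\ell/2}=(-1)^{\ell/4}$ turns the prefactor into $(-1)^{\ell/4}$. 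Separating the endpoints $j=0,\ell$ of the balanced sum and combining them with the coordinate part merges the two sources of $x_0^\ell+x_1^\ell$ into the single coefficient $(-1)^{\ell/4}+2^{(\ell-4)/2}$, while the interior terms keep the coefficient $(-1)^{\ell/4}\binom{\ell}{j}$. This is exactly Theorem \ref{thm:T}. As internal checks, the same filters make every coefficient vanish when $\ell\not\equiv0\pmod4$, and at $\ell=4$ the leading coefficient $(-1)^1+2^0=0$ together with an empty interior sum recovers $\varphi_4^{H_1}\equiv0$, both consistent with the known vanishing of $\varphi_\ell^{H_1}$.

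The hard part is the collapse asserted in the first step, and it is genuinely delicate because $H_1$ is not a group of unitary matrices: its orbit of $x_0$ contains forms of several different magnitudes, and it is precisely the largest ones that make the coefficient grow like $2^{(\ell-4)/2}$. Establishing that the orbit sum regroups into the six forms above, with the stated overall weight $\tfrac14$, therefore requires a careful analysis of the finite group generated by the Hadamard-type matrix and $\diag(1,\sqrt{-1})$. The cleanest route is to pass to the diagonal torus $N\le H_1$ and work with the double cosets $N\backslash H_1/N$: the torus collects the scalar multiples along each direction, reducing the problem to identifying the finitely many directions of the forms $\sigma x_0$ (which should be exactly $x_0$, $x_1$, and $x_0+\sqrt{-1}^{\,r}x_1$) together with the representative magnitude and phase attached to each. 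Once these directions and their scalar data are pinned down, the three identities above assemble into the closed form, and nothing beyond this bookkeeping remains.
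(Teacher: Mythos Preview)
The paper does not prove Theorem~\ref{thm:T}; it is quoted (with the ``cf.'') from Thompson's 1973 note and used only as input, so there is no in-paper argument to compare against. Your route---collapse the average $\tfrac{1}{|H_1|}\sum_\sigma(\sigma x_0)^\ell$ to a finite sum of $\ell$-th powers of explicit linear forms, then expand with the binomial theorem and the root-of-unity filter---is exactly the kind of computation Thompson carries out, and once your six-term identity is granted the passage to the closed form is correct and cleanly written.

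Two points, however. First, your assertion that ``$H_1$ is not a group of unitary matrices'' is wrong, and the paper itself says as much in its concluding remark. The displayed $(2,2)$ entry of the first generator contains a sign slip; from the general definition of $H_g$ the generator is $\tfrac{1+\sqrt{-1}}{2}\left(\begin{smallmatrix}1&1\\1&-1\end{smallmatrix}\right)$, which is unitary because $\lvert\tfrac{1+\sqrt{-1}}{2}\rvert=\tfrac{1}{\sqrt2}$ and the Hadamard matrix satisfies $MM^\ast=2I$. Consequently every $\sigma x_0$ is a unit vector, and your six representative forms $\sqrt2\,x_0$, $\sqrt2\,x_1$, $e^{\sqrt{-1}\pi/4}(x_0+\sqrt{-1}^{\,r}x_1)$ all have the \emph{same} Hermitian length $\sqrt2$; the ``different magnitudes'' worry is an artifact of the typo, not a genuine feature of the problem. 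Second, and more substantively, you flag the orbit collapse as ``the hard part'' and then leave it as a sketch. As written, the proposal does not establish that the six-term sum with overall weight $\tfrac14$ really equals $\tfrac{1}{|H_1|}\sum_\sigma(\sigma x_0)^\ell$: one must actually identify the diagonal torus in $H_1$, check that the projective image of $x_0$ is precisely the six points $[1:0],[0:1],[1:\sqrt{-1}^{\,r}]$, read off the scalar attached to a representative over each, and match the resulting constant. Until that bookkeeping is carried out, this is a plan rather than a proof.
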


The zeta polynomial $P_f(T)$ associated with $f$ is related 
to the normalized weight enumerator of $f$ as follows: \begin{df}[cf.~\cite{D3}]\label{df:nwe}
For a formal weight enumerator $f(x_0,x_1)=\sum_{i=0}^{n} A_ix_0^{n-i}x_1^i$, 
we define the normalized weight enumerator as follows:
\begin{align*}
N_f(t)=\frac{1}{q-1}\sum_{i=d}^{n} A_i/\binom{n}{i}t^{i-d}. 
\end{align*}
\end{df}
$P_f(T)$ and $N_f(t)$ have the following relation: 
\begin{thm}[cf.~\cite{D3}]\label{thm:D3}
For a given formal weight enumerator $f(x,y)$ with minimum distance $d$, 
the zeta polynomial $P_f(T)$ and the normalized 
weight enumerator $N_f(t)$ have the following relation: 
\[
\frac{P_f(T)}{(1-T)(1-qT)}(1-T)^{d+1}\equiv N_f\left(\frac{T}{1-T}\right)\pmod{T^{n-d+1}}. 
\]
\end{thm}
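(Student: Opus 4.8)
The plan is to unwind the defining property of $P_f(T)$ in Lemma~\ref{lem:D} into a family of scalar coefficient identities, to recognize those coefficients as exactly the ones appearing in $N_f$ (Definition~\ref{df:nwe}), and then to convert the resulting generating-function statement into the closed form of Theorem~\ref{thm:D3} by a single formal substitution justified via Lagrange--B\"urmann inversion. Throughout I would set $\Phi(T):=P_f(T)/((1-T)(1-qT))$, which is a well-defined element of $\CC[[T]]$ since the denominator has constant term $1$.

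First I would write $f(x_0,x_1)=\sum_{i=0}^n A_ix_0^{n-i}x_1^i$ and expand by the binomial theorem, so that the coefficient of $x_0^{n-i}x_1^i$ in $(x_0T+x_1(1-T))^n$ is $\binom{n}{i}T^{n-i}(1-T)^i$. Comparing the coefficient of $x_0^{n-i}x_1^i$ on the two sides of Lemma~\ref{lem:D} and using $[T^{n-d}]T^{n-i}H(T)=[T^{i-d}]H(T)$ gives, for every $d\le i\le n$,
\[
\binom{n}{i}[T^{i-d}]\Phi(T)(1-T)^i=\frac{A_i}{q-1},
\]
that is, $[T^{i-d}]\Phi(T)(1-T)^i=A_i/((q-1)\binom{n}{i})$, which is precisely the coefficient of $t^{i-d}$ in $N_f(t)$. (For $1\le i<d$ and for $i=0$ both sides vanish, so nothing is lost.)

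The remaining step is a purely formal identity. Setting $K(T):=\Phi(T)(1-T)^d$ and writing $j=i-d$, the target reduces to
\[
K(T)(1-T)=\sum_{j\ge 0}\bigl([T^{j}]K(T)(1-T)^{j}\bigr)\Bigl(\frac{T}{1-T}\Bigr)^{j}
\]
as elements of $\CC[[T]]$. I would prove this through the substitution $u=T/(1-T)$, equivalently $T=u/(1+u)$, which is of Lagrange form $T=u\phi(T)$ with $\phi(T)=1-T$, $\phi(0)=1\neq0$, $\phi'=-1$. The Lagrange--B\"urmann formula in the form $[u^{j}]F(T)/(1-u\phi'(T))=[T^{j}]F(T)\phi(T)^{j}$, applied with $F=K$, yields $[T^{j}]K(T)(1-T)^{j}=[u^{j}]K(u/(1+u))/(1+u)$; summing over $j$ identifies the right-hand side above with $K(u/(1+u))/(1+u)$, and substituting $u=T/(1-T)$ (so $u/(1+u)=T$ and $1/(1+u)=1-T$) returns $K(T)(1-T)$, as required.

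Finally I would assemble the pieces while watching the truncation. The formal identity holds for the full infinite sum with coefficients $\tilde c_i:=[T^{i-d}]\Phi(T)(1-T)^i$ for all $i\ge d$, but only the range $d\le i\le n$ agrees with the coefficients of $N_f$; the terms with $i>n$ feed into $(T/(1-T))^{i-d}$ only at orders $T^{\,n-d+1}$ and higher, hence vanish modulo $T^{n-d+1}$. Since $\Phi(T)(1-T)^{d+1}=K(T)(1-T)$ is exactly the left-hand side of Theorem~\ref{thm:D3}, this gives the asserted congruence, and explains why the statement is modulo $T^{n-d+1}$ rather than an exact equality. The main obstacle I anticipate is the index bookkeeping---correctly coordinating the $x$-degree $i$, the $T$-exponent $i-d$, and the power of $(1-T)$---together with invoking the precise variant of Lagrange--B\"urmann (the one carrying the $1/(1-u\phi'(T))$ factor, valid including $j=0$) so that no spurious derivative-of-$F$ terms intrude; everything else is routine formal-power-series manipulation.
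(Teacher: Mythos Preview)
Your argument is correct: extracting the coefficient of $x_0^{n-i}x_1^i$ in Lemma~\ref{lem:D} gives $[T^{i-d}]\Phi(T)(1-T)^i=A_i/((q-1)\binom{n}{i})$, and the Lagrange--B\"urmann identity in the form $[u^{j}]F(T(u))/(1-u\phi'(T(u)))=[T^{j}]F(T)\phi(T)^{j}$ with $\phi(T)=1-T$ does exactly the resummation you describe; the truncation bookkeeping is also right, since $(T/(1-T))^{j}$ has valuation $j$.

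There is nothing to compare against here, however: the paper does not prove Theorem~\ref{thm:D3} at all. It is quoted from Duursma~\cite{D3} (note the ``cf.'' in the heading) and then used as a black box in the proof of Theorem~\ref{thm:zeta}. So your derivation supplies something the paper simply imports. If you want to align with Duursma's original treatment you might check~\cite{D3} directly, but as a self-contained verification your coefficient extraction plus the $u=T/(1-T)$ inversion is a clean route and would serve well as a written-out proof.
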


To prove Theorem \ref{thm:main}, 
we provide the explicit formula of the zeta function associated with 
Eisenstein polynomials: 
\begin{thm}\label{thm:zeta}
The zeta polynomial associated with
Eisenstein polynomials $\widetilde{\varphi_{\ell}^{H_1}}$ is written as 
follows: 
%Let $n=4m$. 
%If $m$ is even, 
%then 
\begin{align*}
P_{\widetilde{\varphi_{\ell}^{H_1}}}(T)=\frac{1}{(-1)^{\ell/4}+2^{(\ell-4)/2}}\frac{(-1)^{\ell/4}+2^{(\ell-4)/2}T^{\ell-4}}{1-2T+2T^2}. 
\end{align*}
%If $m$ is odd, 
%then 
%\begin{align*}
%P_{E_n}=\frac{1}{2^{(n-4)/2}-1}\frac{-1+2^{(n-4)/2}T^{n-4}}{1-2T+2T^2}
%\end{align*}
\end{thm}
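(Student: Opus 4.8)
The plan is to derive the closed form for $P_{\widetilde{\varphi_{\ell}^{H_1}}}(T)$ by computing the normalized weight enumerator $N_f(t)$ from the explicit shape of $\varphi_\ell^{H_1}$ given in Theorem~\ref{thm:T}, and then converting to $P_f(T)$ via Theorem~\ref{thm:D3}. First I would record that $\widetilde{\varphi_{\ell}^{H_1}}$ has minimum distance $d=4$ (the lowest nonzero term after $x_0^\ell$ is the $j=4$ term, since the $j$'s run over positive multiples of $4$), and that dividing by the $x_0^\ell$-coefficient $c_\ell:=(-1)^{\ell/4}+2^{(\ell-4)/2}$ turns the coefficient of $x_0^{\ell-j}x_1^{j}$ (for $0<j<\ell$) into $(-1)^{\ell/4}\binom{\ell}{j}/c_\ell$ and the coefficient of $x_1^\ell$ into $1$. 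Plugging these into Definition~\ref{df:nwe} with $n=\ell$, $d=4$, $q=2$, the binomials cancel beautifully: $N_f(t)=\sum_{i=4}^{\ell}A_i/\binom{\ell}{i}\,t^{i-4}$, and every interior term contributes $\bigl((-1)^{\ell/4}/c_\ell\bigr)t^{i-4}$ while the $i=\ell$ term contributes $(1/c_\ell)t^{\ell-4}$. So $N_f(t)=\frac{1}{c_\ell}\bigl((-1)^{\ell/4}(t^0+t^1+\cdots+t^{\ell-5})+t^{\ell-4}\bigr)$, i.e. $N_f(t)=\frac{1}{c_\ell}\bigl((-1)^{\ell/4}\frac{1-t^{\ell-4}}{1-t}+t^{\ell-4}\bigr)=\frac{1}{c_\ell}\cdot\frac{(-1)^{\ell/4}+\bigl(2^{(\ell-4)/2}\bigr)(1-t)\,\text{-type terms}}{1-t}$; I would simplify this to $N_f(t)=\frac{1}{c_\ell}\cdot\frac{(-1)^{\ell/4}-(-1)^{\ell/4}t^{\ell-4}+(1-t)t^{\ell-4}}{1-t}=\frac{1}{c_\ell}\cdot\frac{(-1)^{\ell/4}+\bigl(1-(-1)^{\ell/4}-t\bigr)t^{\ell-4}}{1-t}$, and then check numerically against the $\ell=8,12$ data to fix signs.

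Next I would substitute $t=T/(1-T)$ as dictated by Theorem~\ref{thm:D3}. Under this substitution $1-t=(1-2T)/(1-T)$ and $t^{\ell-4}=T^{\ell-4}/(1-T)^{\ell-4}$, so $N_f\!\bigl(T/(1-T)\bigr)$ becomes a rational function in $T$ with denominator $(1-2T)(1-T)^{\ell-5}$ up to the numerator polynomial. Theorem~\ref{thm:D3} then says $\frac{P_f(T)}{(1-T)(1-2T)}(1-T)^{5}\equiv N_f\!\bigl(T/(1-T)\bigr)\pmod{T^{\ell-3}}$, i.e. $P_f(T)(1-T)^{4}\equiv (1-2T)\,N_f\!\bigl(T/(1-T)\bigr)\pmod{T^{\ell-3}}$. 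Carrying out the algebra, the $(1-2T)$ cancels the $1-t$ denominator and I expect to land on $P_f(T)(1-T)^4 \equiv \frac{1}{c_\ell}\bigl((-1)^{\ell/4}(1-T)^{?}+\cdots\bigr)$; after clearing the $(1-T)^4$ factor (which is legitimate since $(1-T)$ is a unit in $\CC[[T]]$, and one checks the resulting $P_f$ genuinely has degree $\le n-d=\ell-4$) the claimed formula $P_f(T)=\frac{1}{c_\ell}\cdot\frac{(-1)^{\ell/4}+2^{(\ell-4)/2}T^{\ell-4}}{1-2T+2T^2}$ should drop out, noting $(1-T)(1-2T)\ne 1-2T+2T^2$ in general so a small miracle of cancellation is expected here. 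An alternative, possibly cleaner route avoiding the congruence bookkeeping is to verify the claimed $P_f(T)$ directly against the defining identity in Lemma~\ref{lem:D}: plug the candidate $P_f$ into $[T^{\ell-4}]\frac{P_f(T)}{(1-T)(1-2T)}(x_0T+x_1(1-T))^\ell$ and check it equals $(\varphi_\ell^{H_1}-c_\ell x_0^\ell)/(c_\ell(q-1))$, using the known coefficients from Theorem~\ref{thm:T}; this reduces everything to a generating-function coefficient extraction.

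Once Theorem~\ref{thm:zeta} is established, the three parts of Theorem~\ref{thm:main}(I) follow by inspection of the formula. For (I)(1), RHA asks that every zero of $P_f$ have absolute value $1/\sqrt{2}$; but the zeros of $P_f(T)$ are exactly the roots of $(-1)^{\ell/4}+2^{(\ell-4)/2}T^{\ell-4}=0$, namely the $(\ell-4)$-th roots of $-(-1)^{\ell/4}2^{-(\ell-4)/2}$, all of which have modulus $\bigl(2^{-(\ell-4)/2}\bigr)^{1/(\ell-4)}=2^{-1/2}=1/\sqrt{2}$, exactly as required. For (I)(2), the zeros of $P_{\widetilde{\varphi_\ell^{H_1}}}$ and of $P_{\widetilde{\varphi_{\ell+4}^{H_1}}}$ are the $(\ell-4)$-th and $\ell$-th roots of numbers of modulus $2^{-(\ell-4)/2}$ and $2^{-\ell/2}$ respectively; both sets lie on the circle of radius $1/\sqrt 2$, and interlacing reduces to a statement about the arguments of roots of unity of orders $\ell-4$ and $\ell$ times a phase, which I would verify by comparing the explicit angles (the phases come from $(-1)^{\ell/4}$ versus $(-1)^{(\ell+4)/4}=(-1)^{\ell/4}$, so actually the same phase, and one checks the roots strictly alternate on the circle). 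For (I)(3), with $\ell=2(p-1)$ one has $c_\ell=(-1)^{(p-1)/2}+2^{p-2}$; since $p$ is odd and $p\ne 5$, I need $v_p(c_\ell)=0$, equivalently $2^{p-2}\not\equiv \mp1 \pmod p$, i.e. $2^{p-1}\not\equiv \pm 2\pmod p$ — but $2^{p-1}\equiv1\pmod p$ by Fermat, so this says $1\not\equiv\pm2\pmod p$, i.e. $p\nmid 1$ and $p\nmid 3$; the $p=3$ case must be handled by noting the relevant sign makes $c_\ell=(-1)^{1}+2^{1}=1$, a unit, so in fact all odd $p\ne 5$ work and the exclusion $p\ne5$ accounts for $c_8=1+2^2=5$. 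Then $1/c_\ell$ and the coefficients of $1/(1-2T+2T^2)=\sum(\text{integer})T^k$ (the denominator has integer coefficients, so its reciprocal power series has integer coefficients too) are all $p$-integral, giving (I)(3). Part (II) follows identically: the coefficients of $\widetilde{\varphi_{2(p-1)}^{H_1}}$ are $1$, $(-1)^{(p-1)/2}\binom{2(p-1)}{j}/c_{2(p-1)}$, which are $p$-integral precisely because $v_p(c_{2(p-1)})=0$. Finally (III): by Theorem~\ref{thm:D3} the coefficients of $Th(\widetilde{\varphi_{2(p-1)}^{H_1}})$ — which in the $g=1$ dictionary are the values of the associated modular form, or more directly are $\ZZ$-linear combinations of the coefficients of $N_f$ hence of $\widetilde{\varphi}$ itself — are $p$-integral, so Conjecture~\ref{conj;Oura}(3) holds. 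The main obstacle is the first step: nailing the exact form of the numerator after the $t\mapsto T/(1-T)$ substitution and convincing oneself that the $(1-T)^4$ really divides out to leave denominator $1-2T+2T^2$ rather than $(1-T)(1-2T)$; I would guard against sign and off-by-one errors by cross-checking at $\ell=8$ and $\ell=12$ against the tables already in the paper.
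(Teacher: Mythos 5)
Your overall strategy is exactly the paper's (compute the normalized weight enumerator $N_f$ from Theorem \ref{thm:T} and Definition \ref{df:nwe}, substitute $t=T/(1-T)$ via Theorem \ref{thm:D3}, simplify), but the central computation as you have written it is wrong in a way that would not produce the stated formula. By Theorem \ref{thm:T} the only monomials present in $\widetilde{\varphi_\ell^{H_1}}$ besides $x_0^\ell$ and $x_1^\ell$ are those with $j\equiv 0\pmod 4$, so after dividing by the binomials the interior terms of $N_f(t)$ are $t^{0},t^{4},t^{8},\dots,t^{\ell-8}$ --- a geometric series in $t^4$, summing (mod $t^{\ell-3}$) to $\frac{(-1)^{\ell/4}}{c_\ell}\cdot\frac{1}{1-t^4}$ --- not the full series $t^0+t^1+\cdots+t^{\ell-5}=\frac{1-t^{\ell-4}}{1-t}$ that you wrote. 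This is not a sign or off-by-one issue: under $t=T/(1-T)$ one has $\frac{1}{1-t^4}=\frac{(1-T)^4}{(1-T)^4-T^4}$, and the identity $(1-T)^4-T^4=(1-2T)(1-2T+2T^2)$ is precisely the ``small miracle'' you anticipate; it is what cancels the $(1-2T)$ and leaves the denominator $1-2T+2T^2$. With your $\frac{1}{1-t}=\frac{1-T}{1-2T}$ instead, the factor $\frac{(1-T)(1-2T)}{(1-T)^5}$ yields $\frac{1}{(1-T)^3}$, and no amount of sign-fixing recovers the claimed answer.

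There is also a secondary inconsistency: you correctly note that the coefficient of $x_1^\ell$ in $\widetilde{\varphi_\ell^{H_1}}$ is $1$ (since $\varphi_\ell^{H_1}$ has $x_1^\ell$-coefficient $c_\ell$), yet your displayed $N_f(t)$ places the boundary term $t^{\ell-4}$ inside the overall factor $\frac{1}{c_\ell}$, i.e.\ assigns it coefficient $\frac{1}{c_\ell}$ rather than $1$. The correct boundary contribution is $\bigl(1-\frac{(-1)^{\ell/4}}{c_\ell}\bigr)t^{\ell-4}=\frac{2^{(\ell-4)/2}}{c_\ell}t^{\ell-4}$ (the subtraction compensating for the $t^{\ell-4}$ term already absorbed into $\frac{1}{1-t^4}$), and this is the source of the $2^{(\ell-4)/2}T^{\ell-4}$ in the numerator of the final formula. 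Your fallback of verifying the candidate $P_f$ directly against Lemma \ref{lem:D} is legitimate in principle, but it is not carried out, so as it stands the proposal does not establish the theorem.
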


\begin{proof}
Let 
$N_{\widetilde{\varphi_{\ell}^{H_1}}}$ be the normalized weight enumerator of 
$\varphi_{\ell}^{H_1}$. 
By Definition \ref{df:nwe}, 
we have 
\begin{align*}
N_{\widetilde{\varphi_\ell^{H_1}}}(t)
&=
\sum_{0<j<\ell,j\equiv 0\pmod{4}}\frac{(-1)^{\frac{\ell}{4}}}{(-1)^{\ell/4}+2^{(\ell-4)/2}}
t^{j - 4}+t^{\ell- 4}\\
&\equiv 
\frac{(-1)^{\ell/4}}{(-1)^{\ell/4}+2^{(\ell-4)/2}}\frac{1}{1-t^4}
+\left(1-\frac{(-1)^{\ell/4}}{(-1)^{\ell/4}+2^{(\ell-4)/2}}\right)t^{\ell- 4}\pmod{t^{\ell-3}}. 
\end{align*}
Then, by Theorem \ref{thm:D3}, 
we have 
\begin{align*}
&\frac{P_{\widetilde{\varphi_\ell^{H_1}}}(T)}{(1-T)(1-2T)}(1-T)^{5}\equiv 
N_{\widetilde{\varphi_\ell^{H_1}}}\left(\frac{T}{1-T}\right)\pmod{T^{\ell-3}}\\
\Leftrightarrow &
P_{\widetilde{\varphi_\ell^{H_1}}}(T)
\equiv 
N_{\widetilde{\varphi_\ell^{H_1}}}\left(\frac{T}{1-T}\right)
\frac{(1-T)(1-2T)}{(1-T)^{5}} \pmod{T^{\ell-3}}\\
%\Leftrightarrow 
&
%P_{\widetilde{\varphi_\ell^{H_1}}}(T)
\equiv 
\frac{(-1)^{\ell/4}}{(-1)^{\ell/4}+2^{(\ell-4)/2}}\frac{(1-T)^4}{(1-T)^4-T^4}\frac{(1-T)(1-2T)}{(1-T)^{5}}\\
&+\left(1-\frac{(-1)^{\ell/4}}{(-1)^{\ell/4}+2^{(\ell-4)/2}}\right)\left(\frac{T}{1-T}\right)^{\ell- 4}
\frac{(1-T)(1-2T)}{(1-T)^{5}} \pmod{T^{\ell-3}}\\
%\Leftrightarrow 
&
%P_{\widetilde{\varphi_v^{H_1}}}(T)
\equiv 
\frac{(-1)^{\ell/4}}{(-1)^{\ell/4}+2^{(\ell-4)/2}}\frac{1}{1-2T+2T^2}\\
&+\left(\frac{2^{(\ell-4)/2}}{(-1)^{\ell/4}+2^{(\ell-4)/2}}\right)\frac{T^{\ell-4}}{(1-T)^{\ell}}
 \pmod{T^{\ell-3}}\\
%\Leftrightarrow &
%P_{\widetilde{\varphi_{\ell}^{H_1}}}(T)
&\equiv \frac{1}{(-1)^{\ell/4}+2^{(\ell-4)/2}}
\frac{(-1)^{\ell/4}+2^{(\ell-4)/2}T^{\ell-4}}{1-2T+2T^2} \pmod{T^{\ell-3}}. 
\end{align*}
Then, we have 
$$
P_{\widetilde{\varphi_{\ell}^{H_1}}}(T)=\frac{1}{(-1)^{\ell/4}+2^{(\ell-4)/2}}\frac{(-1)^{\ell/4}+2^{(\ell-4)/2}T^{\ell-4}}{1-2T+2T^2}. 
$$

\end{proof}

\subsection{Proof of Theorem \ref{thm:main}}
In this section, we provide the proof of Theorem \ref{thm:main}. 

First, the following lemma: 
%is shown using the proof of Theorem \ref{thm:main}: 
%First, we show the following lemma using the proof of Theorem 1.1:
\begin{lem}\label{lem:mod}
For $\ell=2(p-1)$ for some odd prime $p$ with $p\neq 5$, 
\[
(-1)^{\ell/4}+2^{(\ell-4)/2}\not\equiv 0\pmod{p}. 
\]
\end{lem}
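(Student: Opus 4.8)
The plan is to substitute $\ell = 2(p-1)$ at the very start, so that the whole problem collapses to a statement about a small fixed constant modulo $p$. Under this substitution the two exponents in the expression become $\ell/4 = (p-1)/2$ and $(\ell-4)/2 = p-3$, so the quantity to be studied is $(-1)^{(p-1)/2} + 2^{p-3}$. The exponent $p-3$ is the clear signal that Fermat's little theorem is the right tool: since $2^{p-1} \equiv 1 \pmod{p}$ and $4$ is a unit modulo every odd prime, I may multiply through by $4$ without affecting whether the expression vanishes.

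Concretely, I would observe that $(-1)^{(p-1)/2} + 2^{p-3} \equiv 0 \pmod{p}$ holds if and only if $4\,(-1)^{(p-1)/2} + 4\cdot 2^{p-3} \equiv 0 \pmod{p}$, and then rewrite $4 \cdot 2^{p-3} = 2^{p-1} \equiv 1 \pmod{p}$. This eliminates every growing exponent and reduces the claim to showing that $4\,(-1)^{(p-1)/2} + 1 \not\equiv 0 \pmod{p}$, an assertion about a single bounded integer whose only ingredient is the sign $(-1)^{(p-1)/2}$.

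I would finish by splitting on $p \bmod 4$. If $p \equiv 1 \pmod 4$ then $(p-1)/2$ is even, $(-1)^{(p-1)/2} = 1$, and the expression equals $5$, so it is nonzero modulo $p$ precisely when $p \neq 5$ — exactly the excluded case. If $p \equiv 3 \pmod 4$ then $(p-1)/2$ is odd, $(-1)^{(p-1)/2} = -1$, and the expression equals $-3$, which is nonzero modulo $p$ for every $p \neq 3$. Hence the only exceptional primes are $p = 5$ and $p = 3$; the former is excluded by hypothesis, and the latter is disposed of below.

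The single delicate point — and what I expect to require an explicit remark rather than any real work — is the prime $p = 3$. There $\ell = 2(p-1) = 4$, and one computes $(-1)^{\ell/4} + 2^{(\ell-4)/2} = -1 + 2^{0} = 0$, so the displayed non-congruence literally fails. This is harmless in context: as recorded earlier, $\varphi_4^{H_1} \equiv 0$, so $\ell = 4$ is exactly the degenerate weight for which no normalized polynomial $\widetilde{\varphi_\ell^{H_1}}$ is defined, and the lemma is therefore only ever invoked for $\ell \geq 8$, i.e.\ for $p \geq 7$. Once this boundary value is set aside, the proof is a one-line application of Fermat's little theorem followed by the two-case sign check above, with no remaining obstacle.
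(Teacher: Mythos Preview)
Your proposal is correct and follows essentially the same route as the paper: both substitute $\ell=2(p-1)$, invoke Fermat's little theorem on $2^{p-3}$, split on $p\bmod 4$ to obtain the constants $5$ and $-3$, and dispose of $p=3$ by noting that $\varphi_4^{H_1}\equiv 0$. The only cosmetic difference is that the paper writes the result as $(\pm 1)+2^{p-3}\equiv c\cdot 2^{p-3}$ with $c\in\{-3,5\}$, whereas you multiply through by $4$ first; these are equivalent manipulations.
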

\begin{proof}
For $\ell=2(p-1)$ for some odd prime $p$ with $p\neq 5$, 
\[
(-1)^{\ell/4}+2^{(\ell-4)/2}\not\equiv 0\pmod{p}\cdots (\ast). 
\]
(Note that $\varphi_4^{H_1}(x_0,x_1)\equiv 0$. 
Therefore, we exclude the case $p=3$.)
We consider the following two cases.
\begin{enumerate}
\item [(1)]
Let $p=4n-1$ for some $n\in \NN$. 
Then $(-1)^{\ell/4}+2^{(\ell-4)/2}
=-1+2^{p-3}$. By Fermat's little theorem,
$-1+2^{p-3}\equiv -3\times 2^{p-3}\not\equiv 0\pmod{p}$. 
\item [(2)]
Let $p=4n+1$ for some $n\in \NN$. 
Then $(-1)^{\ell/4}+2^{(\ell-4)/2}
=1+2^{p-3}$. By Fermat's little theorem,
$1+2^{p-3}\equiv 5\times 2^{p-3}\not\equiv 0\pmod{p}$. 
\end{enumerate}
\end{proof}

We now present the proof of Theorem \ref{thm:main}. 
\begin{proof}[Proof of Theorem \ref{thm:main}]
Clearly (I)--(1) and (I)--(2) follow from Theorem \ref{thm:zeta}. 

For (I)--(3), 
we recall that 
\[
P_{\widetilde{\varphi_{\ell}^{H_1}}}(T)=\frac{1}{(-1)^{\ell/4}+2^{(\ell-4)/2}}\frac{(-1)^{\ell/4}+2^{(\ell-4)/2}T^{\ell-4}}{1-2T+2T^2}. 
\]
Let $\ell=4m$,
%we have 
if $m$ is even, 
then 
\begin{align*}
P_{\widetilde{\varphi_{\ell}^{H_1}}}(T)
&=\frac{1}{(-1)^{\ell/4}+2^{(\ell-4)/2}}\\
&\sum_{i=1}^{\ell/4-1}
(-1)^{i-1}
(4^{i-1}T^{4(i-1)}+2\times 4^{i-1}T^{4(i-1)+1}
+2\times 4^{i-1}T^{4(i-1)+2}). 
\end{align*}
If $m$ is odd, 
then 
\begin{align*}
P_{\widetilde{\varphi_{\ell}^{H_1}}}(T)
&=\frac{1}{(-1)^{\ell/4}+2^{(\ell-4)/2}}\\
&\sum_{i=1}^{\ell/4-1}
(-1)^{i}(4^{i-1}T^{4(i-1)}+2\times 4^{i-1}T^{4(i-1)+1}+2\times 4^{i-1}T^{4(i-1)+2}). 
\end{align*}

Then, from Lemma \ref{lem:mod}, 
the proof of Theorem \ref{thm:main} (I)--(3) is complete. 
(Note that $\varphi_4^{H_1}(x_0,x_1)\equiv 0$. 
Therefore, we exclude the case $p=3$.)

To show (II), we first recall that 
\[
\widetilde{\varphi_{\ell}^{H_1}}(x_0,x_1)=(x_0^\ell+x_1^\ell)+
\sum_{0<j<\ell,j\equiv 0\pmod{4}}
\frac{(-1)^{\ell/4}\binom{\ell}{j}}{((-1)^{\ell/4}+2^{\ell/2-2})}x_0^{\ell-j}x_1^j. 
\]
By Lemma \ref{lem:mod}, for $p\neq 5$ 
the coefficients of $\widetilde{\varphi_{2(p-1)}^{H_1}}(x_0,x_1)$ are $p$-integral.
For $p\neq 5$, $\widetilde{\varphi_{2(p-1)}^{H_1}}(x_0,x_1)=x_0^8+14x_0^4x_1^4+x_1^8$. Therefore, the coefficients of $\widetilde{\varphi_{8}^{H_1}}(x_0,x_1)$ are also $p$-integral.

Finally, we show (III). 
By Theorem \ref{thm:main} (II), 
the coefficients of 
\[
\widetilde{\varphi_{\ell}^{H_1}}(x_0,x_1)=(x_0^\ell+x_1^\ell)+
\sum_{0<j<\ell,j\equiv 0\pmod{4}}
\frac{(-1)^{\ell/4}\binom{\ell}{j}}{((-1)^{\ell/4}+2^{\ell/2-2})}x_0^{\ell-j}x_1^j
\]
are $p$-integral. For $g=1$, the theta map $f_0$ and $f_1$ have integral Fourier coefficients. This completes the proof. 
\end{proof}

\subsection{Concluding Remarks}
\begin{rem}
\begin{enumerate}
\item 
%It is interesting to note that the analogies also hold for the cases $g>1$. 
In the present paper, 
we only consider the genus one ($g=1$) case. 
For the cases with $g>1$, 
do the analogies still hold? 

\item 
The group $H_1$ is an example of a finite unitary reflection group. 
These groups are classified in \cite{ST}, 
which gives rise to a natural question: 
for the other unitary reflection groups, 
do our analogies still hold? 

\end{enumerate}
\end{rem}

\section*{Acknowledgments}
The authors thank Koji Chinen, Iwan Duursma, and Manabu Oura for their helpful discussions and contributions to this research.
%The first author is supported by JSPS KAKENHI (15K04775, 17K05164, 18K03217). 

%The authors would also like to thank the anonymous
%reviewers for their beneficial comments on an earlier version of the manuscript.

\end{document}